\newtheorem{theorem}{Theorem}
\newtheorem{lemma}[theorem]{Lemma}
\newtheorem{claim}[theorem]{Claim}
\newtheorem{problem}{Problem}
\author
{
Micha Christoph 
}
\thanks{Department of Computer Science, Institute of Theoretical Computer Science, ETH Z\"{u}rich, Switzerland.  \texttt{\{micha.christoph,kalina.petrova,raphaelmario.steiner\}@inf.ethz.ch}. Research of M.C. and R.S. funded by SNSF Ambizione grant No. 216071. Research of K.P. funded by SNSF grant No.  CRSII5 173721.}
\author
{
Kalina Petrova
}
\author
{
Raphael Steiner 
}
\date{\today}
\title{A note on digraph splitting}
\begin{document}
\maketitle

\begin{abstract}
A tantalizing open problem, posed independently by Stiebitz in 1995 and by Alon in 2006, asks whether for every pair of integers $s,t \ge 1$ there exists a finite number $F(s,t)$ such that the vertex set of every digraph of minimum out-degree at least $F(s,t)$ can be partitioned into non-empty parts $A$ and $B$ such that the subdigraphs induced on $A$ and $B$ have minimum out-degree at least $s$ and $t$, respectively. 

In this short note, we prove that if $F(2,2)$ exists, then all the numbers $F(s,t)$ with $s,t\ge 1$ exist and satisfy $F(s,t)=\Theta(s+t)$. 
In consequence, the problem of Alon and Stiebitz reduces to the case $s=t=2$. 
Moreover, the numbers $F(s,t)$ with $s,t \ge 2$ either all exist and grow linearly, or all of them do not exist. 
\end{abstract}

\section{Introduction}
A well-researched area in modern graph theory is that of \emph{graph splitting}. It is concerned with problems in which the vertex set of a given graph is to be split into a given number of parts while meeting specific degree conditions within or between the parts. One of the first instances of such a result is a classical theorem by Lov\'{a}sz~\cite{lovasz} from 1966, stating that for all numbers $s, t \in \mathbb{N}$, every graph $G$ of maximum degree $\Delta(G)\le s+t+1$ admits a partition of its vertex set into sets $A$ and $B$ such that $\Delta(G[A])\le s$ and $\Delta(G[B])\le t$. In the opposite direction, looking for splittings that preserve a given minimum degree, Thomassen~\cite{thomassen1} proved in 1983 that for all integers $s,t \ge 1$ there exists some $f(s,t) \in \mathbb{N}$ such that every graph $G$ of minimum degree $\delta(G)\ge f(s,t)$ admits a partition of its vertex set into non-empty sets $A$ and $B$ such that $\delta(G[A])\ge s$ and $\delta(G[B])\ge t$. He also conjectured that the function $f(s,t)$ can be taken to be $s+t+1$, which is best-possible as can be seen by considering complete graphs. In 1996, Stiebitz~\cite{stiebitz} proved Thomassen's conjecture. 




The perhaps most natural way of extending the above problems to directed graphs is to consider the out-degrees of vertices in a directed graph instead of their total degrees. Alon~\cite{alon} has written a short survey about the arising problems in 2006.
Maybe surprisingly, most of them turn out to be either false or much harder than their undirected cousins. In the following, we briefly summarize what is known.

\paragraph{\textbf{Maximum out-degree.}} The natural analogue of Lov\'{a}sz's theorem for directed graphs would state that for all $s,t \ge 1$, every directed graph $D$ of maximum out-degree $\Delta^+(D)\le s+t+1$ admits a partition $A, B$ of its vertex set such that $\Delta^+(D[A])\le s, \Delta^+(D[B])\le t$. However, this turns out to be completely false --- in 1983, Thomassen~\cite{thomassen3} constructed a sequence $(D_k)_{k=1}^{\infty}$ of digraphs such that $\Delta^+(D_k)=k$ for every $k$ and in every partition $A, B$ of $V(D_k)$, we either have $\Delta^+(D_k[A])=k$ or $\Delta^+(D_k[B])=k$. In other words, no matter how we split $D_k$ into two parts, the maximum out-degree of one of the two parts will not be reduced. However, the situation changes when allowing more than $2$ parts in the partition of the vertex set: Alon~\cite{alon} was the first to prove that every directed graph of maximum out-degree at most $\Delta$ can be split into three parts $A, B, C$ such that the maximum out-degree in each part is bounded by $\frac{2}{3}\Delta$. More generally, Alon's proof yields that for every digraph $D$ the vertex set of $D$ can be partitioned into three sets $A,B,C$ such that for every vertex $v$, at most $\frac{2}{3}d^+(v)$ of its out-neighbors lie in the same part of the partition as $v$. This statement has been independently reproved and strengthened several times, see~\cite{anholcer, girao, knox}. A popular conjecture due to Kreutzer, Oum, Seymour, van der Zypen and Wood~\cite{kreutzer} from 2017, known as the \emph{Majority Coloring Conjecture}, states that the constant $\frac{2}{3}$ in the above result can be improved to $\frac{1}{2}$, which would be best-possible. While this remains widely open, some special cases have been solved, such as tournaments and random graphs, see~\cite{anastos1, anastos2, girao}.

\paragraph{\textbf{Minimum out-degree.}} The natural analogue of Stiebitz's theorem for directed graphs would state that for all $s,t \ge 1$, every directed graph $D$ of minimum out-degree $\delta^+(D) \ge s+t+1$ admits a partition of its vertex set into non-empty sets $A, B$ such that $\delta^+(D[A]) \ge s$ and $\delta^+(D[B]) \ge t$. This statement, too, turns out to be quite false. Namely, Alon~\cite{alon0} proved in 1984 that for every integer $k \ge 1$ and every prime number $p>k^2\cdot 2^{2k-2}$ with $p\equiv 3 \text{ }(\text{mod }4)$, there exists a digraph $D$ of order $p$ such that $\delta^+(D)=\frac{p-1}{2}$ and such that for every non-empty $X\subseteq V(D)$ we have that $\delta^+(D[X])<\frac{k}{2}$ if $|X| \le k$ and $\delta^+(D[X])<\frac{p-1}{2}-k$ if $|X| \le p-k$. Setting $s=\frac{k}{2}$ and $t=\frac{p-1}{2}-k$, we can see that $\delta^+(D)>s+t+1$ but in every partition of $V(D)$ into non-empty sets $A, B$, we either have $|A|\le k$ and thus $\delta^+(D[A]) <s$, or $|B|\le p-k$ and thus $\delta^+(D[B])<t$. More recently, the third author~\cite{steiner} answered a question of Alon~\cite{alon} by proving that for arbitrarily large values of $s=t$, there exists a digraph $D$ with $\delta^+(D)>2s+(1+o(1))\log_3(s)>s+t+1$ such that for every non-empty $X\subseteq V(D)$ with $|X| \le \frac{|V(D)|}{2}$, we have $\delta^+(D[X])<s$. Similarly as above this implies a negative answer to the direct extension of Stiebitz's theorem when $s=t$.

The main open problem in this area is the intriguing question asked independently by Stiebitz in 1995 and Alon~\cite{alon} in 2006 whether the qualitative version of Stiebitz's theorem extends to directed graphs. We also refer to the open problem garden entry~\cite{splittingdigraphs}.

\begin{problem}\label{prob:main}
Does there exist, for all $s,t \ge 1$, a number $F(s,t) \in \mathbb{N}$ such that every digraph $D$ with $\delta^+(D) \ge F(s,t)$ has a partition $V(D)=A\sqcup B$ such that $\delta^+(D[A]) \ge s$ and $\delta^+(D[B])\ge t$?
\end{problem}
In the rest of this paper, we write $F(s,t)$ for the smallest possible integer satisfying the statement in Problem~\ref{prob:main} if it does exist, and set $F(s,t)=\infty$ otherwise. 

So far, $F(s,t)<\infty$ is only known in the case $s=t=1$, in which it is equivalent to the statement that every digraph of large enough minimum out-degree contains two disjoint directed cycles, see~\cite{thomassen2,alon2,bucic} for proofs and extensions of this statement. However, already whether $F(2,2)<\infty$ or even $F(1,2)<\infty$ remain open problems. The only other known results on Problem~\ref{thm:main} are for restricted classes of digraphs, for instance Alon et al.~\cite{alon3} and Yang et al.~\cite{yang} gave positive answers for tournaments and digraphs with balanced out- and in-degrees.

\paragraph{\textbf{Our result.}} As the main contribution of this paper, we show that in order to solve Problem~\ref{prob:main} in full generality, it suffices to decide whether $F(2,2)<\infty$. Moreover, under the assumption of $F(2,2)<\infty$, we settle the question of the asymptotic growth of $F(s,t)$ by showing that it is within a constant factor of the trivial lower bound $s+t+1$ for all values $s,t \ge 1$. We also obtain similar results for the values $F(s,1)$ if we assume that $F(2,1)<\infty$.
\begin{theorem}\label{thm:main}
\ 
\begin{enumerate}[label=(\arabic*)]
    \item\label{main_2} If $F(2,2)<\infty$, then $F(s,t)=\Theta(s+t)$ for $s,t \ge 1$. More precisely, we have 
$$F(s,t)< \frac{e^2}{3}\cdot F(2,2)^6\cdot \max\{s,t\}.$$ 
    \item\label{main_1} If $F(2,1)<\infty$, then $F(s,1)=\Theta(s)$ for $s\ge 1$. More precisely, we have 
$$F(s,1)< \frac{e^2}{3}\cdot F(2,1)^6\cdot s.$$ 
 \end{enumerate}
\end{theorem}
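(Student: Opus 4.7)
Write $C := F(2,2)$. I focus on part~(1); part~(2) follows by the same argument with $F(2,1)$ in place of $F(2,2)$.

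The plan is a two-stage bootstrap from the hypothesis $F(2,2) = C < \infty$. In the first stage, I would establish the intermediate bound $F(s, 2) \le \alpha_1 C^3 s$ for some absolute constant $\alpha_1$. In the second stage, this is bootstrapped symmetrically to yield the general bound $F(s, t) \le \alpha_2 C^6 \max\{s, t\}$. The constant $e^2/3$ in the theorem statement should emerge from careful tuning in each stage, via the elementary inequality $(1 + 1/k)^k < e$ applied to the number of iterations.

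The first stage I would prove by induction on $s$, with base case $s = 2$ given directly by $F(2, 2) = C$. For the inductive step, given a digraph $D$ with $\delta^+(D) \ge \alpha_1 C^3 s$, the plan is to construct a vertex-partition $V(D) = A \sqcup B$ with $\delta^+(D[A]) \ge s$ and $\delta^+(D[B]) \ge 2$ by iteratively applying the $F(2,2)$ splitting to carefully chosen subdigraphs. At each round of the iteration, a single application of $F(2,2)$ produces two subsets each of min out-degree at least $2$; through three nested levels of such iteration, one accumulates enough structure in $A$ to guarantee $\delta^+(D[A]) \ge s$. The cubic dependence $C^3$ reflects these three levels of refinement.

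The second stage is an analogous iteration that boosts the min out-degree on the other side of the partition from $2$ to $t$. Given $D$ with $\delta^+(D) \ge \alpha_2 C^6 \max\{s, t\}$, one first applies the stage-1 result to obtain a coarse partition with $\delta^+(D[A]) \ge s$ and $\delta^+(D[B]) \ge 2$, then refines $B$ (or an appropriate enlargement of $B$ within $D$) by another three-level iteration to boost its min out-degree from $2$ to $t$. This contributes a further factor of $C^3$, giving $C^6$ in total.

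The principal obstacle is the first stage. A naive induction that reapplies $F(2,2)$ to the ``same side'' of a previous split is doomed: after a single application of $F(2,2)$ the subdigraph $D[A]$ is only guaranteed to have $\delta^+(D[A]) \ge 2$, which is generally less than $C$, preventing reuse of the hypothesis. To overcome this, one must invoke $F(2,2)$ on a sequence of subdigraphs that are \emph{not} strictly nested, but instead engineered to retain a large fraction of the original min out-degree on the target side throughout the iteration. Bounding the loss per iteration by an amortized argument, and then optimizing the number of iterations, is precisely where the factor $e^2/3$ appears.
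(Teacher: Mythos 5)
There is a genuine gap: what you have written is a plan, not a proof, and the plan's central mechanism is exactly the part that is missing. You correctly identify the principal obstacle --- after one application of $F(2,2)$ the part you want to keep improving only has minimum out-degree $2$, so the hypothesis cannot be reapplied to it --- but you do not resolve it. Saying that one should invoke $F(2,2)$ on subdigraphs that are ``not strictly nested'' and ``engineered to retain a large fraction of the original min out-degree'' names the difficulty without supplying the engineering: no construction of these subdigraphs is given, no amortized bound on the degree loss per round is proved, and the claim that the constant $e^2/3$ falls out of optimizing the number of iterations via $(1+1/k)^k<e$ is asserted rather than derived. Your second stage has a further structural problem: once you have $A$ with $\delta^+(D[A])\ge s$ and $B$ with $\delta^+(D[B])\ge 2$, the digraph $D[B]$ has minimum out-degree only $2$, so no splitting hypothesis applies to it, and ``enlarging $B$'' to regain degree necessarily removes vertices from $A$ and can destroy $\delta^+(D[A])\ge s$. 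Boosting the two sides sequentially is therefore not obviously possible, and nothing in the proposal explains how to protect one side while refining the other.

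For comparison, the paper's proof never iterates the splitting inside $D$ at all. It applies the hypothesis \emph{once}, to an auxiliary digraph built from $D$ by adding new vertices: first a gadget that subdivides each out-neighbourhood into $F(2,2)$ groups of size $F(2,2)$, showing $F(4,4)\le F(2,2)^2$ (Claim~\ref{2 to 4}); then a layered construction $U_{i,3},\dots,U_{i,s}$ attached to each vertex, with consecutive layers joined by copies of a random bipartite graph from Lemma~\ref{lemma:bipartite_graph} whose expansion property (any small $X$ with three neighbours each in $Y$ forces $|Y|>|X|$) lets a part of minimum out-degree $3$ in the auxiliary digraph expand layer by layer and force $s$ out-neighbours of $v_i$ back in $D$ (Claim~\ref{3 to s}). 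Both sides of the partition are handled simultaneously by the same claim, which is how the paper avoids the sequential-refinement problem, and the constant $e^2/3$ is simply the reciprocal of the expander parameter $\varepsilon=3/(e^2k^3)$ with $k=F(3,3)\le F(2,2)^2$ --- not an artifact of iteration counting. Without some substitute for this amplification gadget (or a genuinely worked-out version of your amortized iteration), your outline does not constitute a proof.
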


\section{Proof of Theorem~\ref{thm:main}}

Towards proving Theorem~\ref{thm:main}, we start with the following useful probabilistic lemma.
\begin{lemma}
\label{lemma:bipartite_graph}
For every integer $k \ge 3$ there exists a constant $\varepsilon= \frac{3}{e^2k^3}$ such that for every $n \in \mathbb{N}$ there exists a bipartite graph $G$ on $2n$ vertices with bipartition $V(G)=S \sqcup T$, such that $|S|=|T|=n$ and all of the following hold.
\begin{enumerate}[label=(\roman*)]
    \item\label{1} Every vertex in $S$ has degree exactly $k$.
    \item\label{2} For every non-empty $X\subseteq S$ with $|X| \le \varepsilon n$ and every $Y\subseteq T$ such that $|N_G(x)\cap Y| \ge 3$ for all $x \in X$, we have that $|Y|>|X|$.
\end{enumerate}
\end{lemma}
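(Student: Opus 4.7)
The natural approach is the probabilistic method: construct $G$ by letting each vertex $s \in S$ independently sample a uniformly random $k$-subset of $T$ as its neighborhood $N_G(s)$, so that property~\ref{1} holds deterministically. It then suffices to show that with positive probability no \emph{bad pair} $(X,Y)$ exists, by which I mean a nonempty $X \subseteq S$ with $|X| \le \varepsilon n$ and $Y \subseteq T$ with $|Y| \le |X|$ such that $|N_G(x) \cap Y| \ge 3$ for every $x \in X$. Note that any bad pair automatically has $|X|, |Y| \ge 3$, so the ranges of summation start at $3$.

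The estimate will go through the first moment method. For fixed $X, Y$ with $|X|=x$ and $|Y|=y$, the events $\{|N_G(s) \cap Y| \ge 3\}_{s \in S}$ are mutually independent, and each has probability at most $\binom{k}{3}(y/n)^3$ by a union bound over which three of the $k$ positions of $N_G(s)$ all land in $Y$. Combining this with $\binom{n}{a} \le (en/a)^a$, the expected number of bad pairs is bounded by
\[
\mu \ \le\ \sum_{x=3}^{\lfloor \varepsilon n\rfloor} \binom{k}{3}^x \left(\frac{en}{x}\right)^x \sum_{y=3}^{x} \left(\frac{en}{y}\right)^y\left(\frac{y}{n}\right)^{3x}.
\]

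The technical crux is to show that this inner $y$-sum is dominated, up to a factor $2$, by its last term at $y=x$. This reduces to a monotonicity estimate for $h(y) := (en/y)^y(y/n)^{3x}$: a short computation using $(1+1/y)^y \le e$ gives $h(y+1)/h(y) \ge n/(y+1)$, which exceeds $1/\varepsilon$ throughout the range $y+1 \le x \le \varepsilon n$. Once the inner sum is bounded by $2h(x) = 2e^x(x/n)^{2x}$, the whole estimate collapses to
\[
\mu \ \le\ 2\sum_{x \ge 3}\left[e^2\binom{k}{3}\,\frac{x}{n}\right]^x.
\]

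Plugging in $x/n \le \varepsilon$ and the specific value $\varepsilon = 3/(e^2 k^3)$ gives $e^2\binom{k}{3}\varepsilon = (k-1)(k-2)/(2k^2) \le 1/2$ for all $k \ge 3$, which is precisely why the constant is tuned this way. The geometric series then sums to at most $2\sum_{x\ge 3}(1/2)^x = 1/2 < 1$, so $\mu < 1$ and a graph with both properties exists. The main obstacle is the monotonicity step, because $h$ combines two competing exponential factors in $y$; the bound $(1+1/y)^y \le e$ is what makes it go through cleanly, and everything else is routine first moment estimation.
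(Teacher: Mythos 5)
Your proposal is correct and follows essentially the same route as the paper: the same random model (each vertex of $S$ picks a uniform $k$-subset of $T$), the same bound $\binom{k}{3}(|Y|/n)^3$ per vertex, and a first-moment/union bound over pairs $(X,Y)$, with the constant $\varepsilon=3/(e^2k^3)$ tuned exactly as in the paper. The only difference is bookkeeping: the paper avoids your inner sum over $|Y|<|X|$ (and hence the monotonicity estimate for $h$) by noting that any violating pair can be padded to one with $|X|=|Y|$, so it suffices to union-bound over equal-sized pairs, which collapses the estimate to a single geometric series.
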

\begin{proof}
 Let $G$ be the random graph on vertex set $S\sqcup T$ obtained by choosing independently for each vertex $v\in S$ uniformly at random $k$ neighbors in $T$. Then, \ref{1} follows immediately, so let us show that \ref{2} also holds with positive probability. Given $Y\subseteq T$ and a set $U\subseteq T$ of three distinct vertices chosen uniformly at random, we get
    $$
        \Pr[U\subseteq Y]=\frac{|Y|(|Y|-1)(|Y|-2)}{n(n-1)(n-2)}\leq \left(\frac{|Y|}{n}\right)^3.
    $$
    It follows by a union bound that given a vertex $v\in S$ and a set $Y\subseteq T$,  
    $$
        \Pr[|N_G(v)\cap Y| \ge 3] \leq\binom{k}{3}\left(\frac{|Y|}{n}\right)^3\leq \frac{1}{6}\left(\frac{k|Y|}{n}\right)^3.
    $$
    Thus, for any non-empty sets $X\subseteq S$ and $Y\subseteq T$ we have
    $$
    \Pr[\forall x\in X:\ |N_G(x)\cap Y| \ge 3] \leq \left(\frac{1}{6}\right)^{|X|} \left(\frac{k|Y|}{n}\right)^{3|X|}.
    $$
    There are at most $\binom{n}{|X|}\binom{n}{|Y|}$ choices of $X$ and $Y$. Note that we only have to consider $X\subseteq S$ and $Y\subseteq T$ with $|X|=|Y|$, since if \ref{2} is not satisfied then there exist equal sized $X$ and $Y$ contradicting it. By a union bound, it follows that the probability that there exist such $X\subseteq S$ and $Y\subseteq T$ not satisfying \ref{2} is at most
    $$
        \sum_{i = 1}^{\lfloor\varepsilon n\rfloor}\binom{n}{i}^2\left(\frac{1}{6}\right)^i\left(\frac{ki}{n}\right)^{3i}\leq \sum_{i = 1}^{\lfloor\varepsilon n\rfloor}\left(\frac{en}{i}\right)^{2i}\left(\frac{1}{6}\right)^i\left(\frac{ki}{n}\right)^{3i}$$ $$ = \sum_{i = 1}^{\lfloor\varepsilon n\rfloor}\left(\frac{e^2k^3i}{6n}\right)^{i} < \sum_{i = 1}^{\infty}\left(\frac{e^2k^3\varepsilon}{6}\right)^{i}=\frac{e^2k^3\varepsilon}{6-e^2k^3\varepsilon}=1.
    $$
    Thus, there exists a graph $G$ which satisfies both conditions.
\end{proof}

\begin{proof}[Proof of Theorem~\ref{thm:main}]
We prove both \ref{main_2} and \ref{main_1} simultaneously. Towards this, for $k\in \mathbb{N}$, let $b(k) = k$ correspond to the proof of \ref{main_2} and $b(k)=1$ to the proof of \ref{main_1}. 

We first show that $F(3,b(3)) \leq F(4,b(4)) \leq F(2,b(2))^2$. The first inequality follows since $F$ is non-decreasing. For the second one, let $D$ be a digraph with $\delta^+(D) \geq F(2,b(2))^2$ and $V(D) = \{v_1, \dots, v_N\}$. Let $D'$ be the digraph obtained from $D$ by taking 
$$V(D') = V(D) \cup \{v_{i,j}|v_i \in V(D), 1 \le j \le F(2,b(2))\}$$
and the following arcs. For each $v_{i,j}\in V(D')$, the arc $(v_i, v_{i,j})$ is in $A(D')$. Moreover, for each $i \in [N]$, if $N^+_D(v_i) = \{u_i^1, \dots, u_i^{k_i}\}$ with $k_i \geq F(2,b(2))^2$, then for each $j=1,\ldots,F(2,b(2))$, we have that all the arcs
$$(v_{i,j}, u_i^{(j-1)F(2,b(2)) + \ell}), \forall \ell \in \{1,\ldots,F(2,b(2))\}$$
are in $A(D')$. Intuitively, to obtain $D'$, we have split the neighbourhood of each vertex $v_i$ into $F(2,b(2))$ groups of size $F(2,b(2))$ each, and added a different intermediate vertex on the path from $v_i$ to each group of its neighbourhood in $D$.

\begin{claim}\label{2 to 4}
    Let $W'\subseteq V(D')$ be non-empty and $W:=W'\cap V(D)$. 
    \begin{enumerate}[label=(\alph*)]
        \item If $\delta^+(D'[W'])\ge 1$, then $W$ is non-empty and $\delta^+(D[W])\ge 1$.
        \item If $\delta^+(D'[W'])\geq 2$ then $\delta^+(D[W])\geq 4$.
    \end{enumerate}
\end{claim}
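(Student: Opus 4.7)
The plan hinges on reading off two structural facts about $D'$. Setting $q:=F(2,b(2))$, every original vertex $v_i \in V(D)$ has out-neighborhood in $D'$ equal to the set of $q$ intermediate vertices $\{v_{i,1},\ldots,v_{i,q}\}$, and each intermediate vertex $v_{i,j}$ has out-neighborhood in $D'$ equal to $U_{i,j}:=\{u_i^{(j-1)q+\ell}:\ell \in [q]\}\subseteq N_D^+(v_i)$. The key observation I would use is that the sets $U_{i,1},\ldots,U_{i,q}$ are pairwise disjoint, because the index windows $\{(j-1)q+1,\ldots,jq\}$ do not overlap for different $j$.

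For part (a), I would first show that $W$ is non-empty. Every intermediate vertex has all its out-neighbors in $V(D)$, so if $W'$ contained no original vertex, then no vertex of $W'$ could have an out-neighbor in $W'$, contradicting $\delta^+(D'[W'])\ge 1$. Having established $W\neq \emptyset$, I would pick any $v_i \in W$ and chase arcs in $D'$: $v_i$ has at least one out-neighbor in $W'$, which can only be some $v_{i,j}$; in turn, $v_{i,j}$ has at least one out-neighbor in $W'$, which must be some $u \in U_{i,j}\cap W'\subseteq N_D^+(v_i)\cap W$. Hence $v_i$ has an out-neighbor in $W$ in the digraph $D$, yielding $\delta^+(D[W])\ge 1$.

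For part (b), I would apply the same arc-chasing argument, but now exploiting the disjointness. Given $v_i \in W$, the assumption $\delta^+(D'[W'])\ge 2$ forces $v_i$ to have two distinct out-neighbors in $W'$, necessarily of the form $v_{i,j_1}, v_{i,j_2}$ with $j_1 \neq j_2$. Each of $v_{i,j_1}, v_{i,j_2}$ then has at least two out-neighbors in $W'$, which lie in $U_{i,j_1}\cap W$ and $U_{i,j_2}\cap W$ respectively. Because $U_{i,j_1}$ and $U_{i,j_2}$ are disjoint subsets of $N_D^+(v_i)$, this produces four distinct out-neighbors of $v_i$ inside $W$ in the digraph $D$, proving $\delta^+(D[W])\ge 4$.

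I do not expect any real obstacle here: the construction of $D'$ is tailored so that a single out-arc in $D'$ corresponds, via a length-two walk through an intermediate vertex, to an out-arc in $D$, while two distinct out-arcs in $D'$ from an original vertex necessarily activate two different groups and hence amplify into four out-arcs in $D$. The only point of care is to cleanly separate the two types of vertices in $W'$ and to handle the non-emptiness of $W$ in part (a) before attempting the arc chase.
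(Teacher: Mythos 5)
Your proof is correct and follows essentially the same arc-chasing argument as the paper, with the disjointness of the groups $U_{i,j}$ made explicit (as it should be) and with a slightly more direct justification of $W\neq\emptyset$ in part (a): you observe that intermediate vertices have all out-neighbors in $V(D)$, whereas the paper extracts a directed cycle and uses acyclicity of $D'-V(D)$; both are fine.
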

\begin{proof}
\noindent
\begin{enumerate}[label=(\alph*)]
    \item Suppose $\delta^+(D'[W'])\geq 1$. This condition implies that $D'[W']$ contains a directed cycle $C$. It is easy to see by definition that $D'-V(D)$ is an acyclic digraph, hence $C$ must meet at least one vertex in $V(D)$. This shows that $V(C)\cap V(D)$ and hence also $W=W'\cap V(D)$ is non-empty. 
    
    Now, consider any vertex $v_i\in W$. Then there exists a vertex $v$ in $W'\cap \{v_{i,j} | 1\le j \le F(2,b(2))\}$ which in turn has an out-neighbor in $W'\cap N^+_D(v_i)$. This latter vertex is then an out-neighbor of $v_i$ in $D[W]$. Since $v_i \in W$ was chosen arbitrarily, this shows that $\delta^+(D[W])\ge 1$.
    \item Similarly, suppose that $\delta^+(D'[W'])\geq 2$. Let $v_i\in W$ be an arbitrary vertex. Since $\text{deg}^+_{D'[W']}(v_i) \geq 2$, there exist at least two vertices in $W'\cap \{v_{i,j} | 1 \le j \le F(2,b(2))\}$. Since each of them has at least $2$ out-neighbors in $D'[W']$, at least $4$ vertices among $N^+_D(v_i)$ are in $W'$ and so also in $W$. Since $v_i \in W$ was chosen arbitrarily, this shows that $\delta^+(D[W])\ge 4$.
\end{enumerate}
\end{proof}
By construction, $\delta^+(D') \geq F(2,b(2))$, so there is a partition of $V(D')$ into non-empty sets $A'$ and $B'$ such that $\delta^+(D'[A'])\geq 2$ and $\delta^+(D'[B']) \geq b(2)$. By Claim~\ref{2 to 4}, we get that $(A:=A'\cap V(D),B:=B' \cap V(D))$ is a partition of $V(D)$ into non-empty sets with $\delta^+(D[A])\geq 4$ and $\delta^+(D[B]) \geq b(4)$.

In the case of \ref{main_2}, suppose w.l.o.g. that $s \geq t$. Now, we show 
$$F(s,b(s)) \leq \left\lfloor\frac{e^2}{3} \cdot F(3,b(3))^3 \cdot s\right\rfloor =: d,$$
from which the result follows since $F$ is non-decreasing and $F(3,b(3))^3\leq F(2,b(2))^6$. Let $D$ be a digraph with $\delta^+(D) \geq d$ and $V(D) = \{v_1, \dots, v_N\}$. Let us define the digraph $D''$ obtained from $D$ with vertex set $$V(D'') = V(D) \cup \bigcup_{\substack{1 \le i \le N,\\3\leq j \leq s-1}} U_{i,j},$$ where each $U_{i,j}$ is a disjoint set of $d$ new vertices. Furthermore, for each $i=1,\ldots,N$, we select an arbitrary subset of size $d$ of $N^+_D(v_i)$ and denote it by $U_{i,s}$. For each $i=1,\ldots,N$ and each $u \in U_{i,3}$, we have that the arc $(v_i,u)$ is in $A(D'')$. 
Additionally, let $G$ be the graph with bipartition $S$ and $T$ given by Lemma~\ref{lemma:bipartite_graph} applied with $n:=d$ and $k:= F(3,b(3))$. Thus, $\varepsilon=\frac{3}{e^2F(3,b(3))^3}$. Then for all $1 \le i \le N$ and $3\leq j \leq s-1$, we add a copy of $G$ between $U_{i,j}$ and $U_{i,j+1}$ to $D''$, identifying $U_{i,j}$ with $S$ and $U_{i,j+1}$ with $T$, and directing all the edges from $U_{i,j}$ to $U_{i,j+1}$. Those are all the arcs in $D''$. Thus, by Lemma~\ref{lemma:bipartite_graph}, for each $3\leq j \leq s-1$, every vertex in $U_{i,j}$ has out-degree exactly $F(3,b(3))$ in $D''$. Now consider any non-empty subset $X \subseteq U_{i,j}$ with $|X|\leq s-1$. Then we have $$|X|<s-\varepsilon=\varepsilon\left(\frac{e^2F(3,b(3))^3}{3}  s-1\right) <\varepsilon d=\varepsilon n.$$ Thus by the second item of Lemma~\ref{lemma:bipartite_graph}, for every $Y \subseteq U_{i,j+1}$ with $|N^+_{D''}(x) \cap Y| \geq 3$ for all $x \in X$, we have $|Y| > |X|$.  

\begin{claim}\label{3 to s}
    Let $W'\subseteq V(D'')$ be non-empty and $W:=W'\cap V(D)$.
    \begin{enumerate}[label=(\alph*)]
        \item If $\delta^+(D''[W'])\ge 1$, then $W$ is non-empty and $\delta^+(D[W])\ge 1$.
        \item If $\delta^+(D''[W'])\geq 3$ then $\delta^+(D[W])\geq s$.
    \end{enumerate}
\end{claim}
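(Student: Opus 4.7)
The plan is to mirror the two-part structure of Claim~\ref{2 to 4}, with an inductive propagation along the chain $U_{i,3}\to U_{i,4}\to\cdots\to U_{i,s}$ replacing the single step used there.

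For part~(a), I would first observe that the digraph $D''-V(D)$ is acyclic, since all of its arcs go from $U_{i,j}$ to $U_{i,j+1}$ for some $i$ and $j$. Hence any directed cycle in $D''[W']$ (which exists by $\delta^+(D''[W'])\ge 1$) must meet $V(D)$, so $W$ is non-empty. For any $v_i\in W$, the hypothesis $\delta^+(D''[W'])\ge 1$ lets me successively pick a vertex in $W'\cap U_{i,3}$, then in $W'\cap U_{i,4}$, and so on, until I reach a vertex in $W'\cap U_{i,s}$; since $U_{i,s}\subseteq N^+_D(v_i)$, this yields an out-neighbor of $v_i$ inside $D[W]$.

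For part~(b), I plan to prove by induction on $j\in\{3,4,\ldots,s\}$ that $|W'\cap U_{i,j}|\ge \min(j,s)$. The base case $j=3$ follows from $\deg^+_{D''[W']}(v_i)\ge 3$, which forces at least three out-neighbors of $v_i$ in $W'$, all necessarily lying in $U_{i,3}$. For the inductive step, write $X_j:=W'\cap U_{i,j}$. If $|X_j|\le s-1$, the inequality $s-1<\varepsilon n$ already established just before the claim statement lets me apply Lemma~\ref{lemma:bipartite_graph}\ref{2} directly: each $x\in X_j$ has at least three out-neighbors in $W'$, all lying in $U_{i,j+1}$, so $|X_{j+1}|>|X_j|\ge \min(j,s)$, giving $|X_{j+1}|\ge \min(j+1,s)$. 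If instead $|X_j|\ge s$, I first restrict to an arbitrary subset $X'_j\subseteq X_j$ of size $s-1$, apply the lemma to $X'_j$, and obtain $|X_{j+1}|>s-1$, i.e.\ $|X_{j+1}|\ge s$. Either way the induction step is complete. Taking $j=s$ gives $|W'\cap U_{i,s}|\ge s$, and since $U_{i,s}\subseteq N^+_D(v_i)\cap V(D)$, we conclude that $v_i$ has at least $s$ out-neighbors in $W$, as required.

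The main subtlety I anticipate is the case $|X_j|\ge s$ in the inductive step: Lemma~\ref{lemma:bipartite_graph}\ref{2} only provides information when $|X|\le \varepsilon n$, so one cannot directly feed all of $X_j$ into it once $X_j$ has grown past $s-1$. Truncating to a subset of size exactly $s-1$ side-steps this issue, and because the target bound is the capped quantity $\min(j+1,s)$ rather than strict growth, this restriction loses nothing.
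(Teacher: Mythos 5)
Your proof is correct and takes essentially the same approach as the paper: acyclicity of $D''-V(D)$ for part (a), and for part (b) the same layer-by-layer induction propagating through the copies of $G$ via Lemma~\ref{lemma:bipartite_graph}\ref{2}. The only cosmetic difference is the inductive invariant --- the paper carries a subset $W_{i,j}\subseteq U_{i,j}\cap W'$ of size exactly $j$ (so the lemma always applies directly), whereas you lower-bound $|W'\cap U_{i,j}|$ by $\min(j,s)$ and truncate to a subset of size $s-1$ before applying the lemma; both handle the $|X|\le \varepsilon n$ restriction equivalently.
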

\begin{proof}
\noindent
    \begin{enumerate}[label=(\alph*)]
        \item Suppose $\delta^+(D''[W'])\geq 1$. Similarly to before, this implies that $D''[W']$ contains a directed cycle $C$. Again we can observe that the digraph $D''-V(D)$ is acyclic, and thus we must have that $V(C)\cap V(D) \subseteq W'\cap V(D)=W$ is non-empty. Furthermore, from $\delta^+(D''[W'])\ge 1$ we get that for every $v_i\in W$ there exists $W_{i,j} \subseteq U_{i,j} \cap W'$ with $|W_{i,j}| = 1$ for every $3\leq j\leq s$, implying that 
    $$|N^+_{D[W]}(v_i)| \geq |W_{i,s}| \geq 1.$$
        \item Suppose $\delta^+(D''[W'])\geq 3$ and consider some $v_i \in W$. We now show by induction on $j$ that for each $3\leq j\leq s$, there is a set $W_{i,j} \subseteq U_{i,j} \cap W'$ with $|W_{i,j}| = j$. For the base case $j=3$, since $\text{deg}^+_{D''[W']}(v_i) \geq 3$, there is some $W_{i,3} \subseteq U_{i,3} \cap W'$ with $|W_{i,3}| = 3$. Suppose we have shown that the statement holds for some $3\leq j \leq s-1$, we now show it holds for $j+1$. Let $X:=W_{i,j}$ and $Y := U_{i,j+1} \cap W'$. Since $W_{i,j} \subseteq W'$, it must hold that for each $x \in X$, we have $$|N^+_{D''[W']}(x)| = |N^+_{D''}(x) \cap Y| \geq 3.$$ Thus, as $|X|=|W_{i,j}| = j \leq s-1$, by the above-mentioned properties coming from Lemma~\ref{lemma:bipartite_graph} we have that $|Y| > |X|=|W_{i,j}| = j$. Taking $W_{i,j+1}$ to be an arbitrary subset of $Y$ of size $j+1$ finishes the induction step. In particular, this shows that 
    $$|N^+_{D[W]}(v_i)| \geq |U_{i,s} \cap W| = |U_{i,s} \cap W'| \geq |W_{i,s}| \geq s,$$
    as desired.
    \end{enumerate}
\end{proof}
Since $\delta^+(D'') \geq k=F(3,b(3))$ by construction, there is a partition of $V(D'')$ into non-empty sets $A'$ and $B'$ such that $\delta^+(D''[A'])\geq 3$ and $\delta^+(D''[B']) \geq b(3)$. It follows by Claim~\ref{3 to s} that $A:=A' \cap V(D)$ and $B:= B' \cap V(D)$ are non-empty and satisfy $\delta^+(D[A]) \ge s$ and $\delta^+(D[B])\ge b(s)$, as desired. This shows that $F(s,b(s))\le d$, concluding the proof.
\end{proof}

\end{document}